\newtheorem{theorem}{Theorem}[section]
\newtheorem{corollary}[theorem]{Corollary}
\newtheorem*{theorem*}{Theorem}
\theoremstyle{remark}
\newtheorem{remark}[theorem]{Remark}
\newtheorem{definition}[theorem]{Definition}
\newtheorem{example}[theorem]{Example}
\numberwithin{equation}{section}
\newcommand{\im}{\operatorname{im }}
\newcommand{\clspan}{\operatorname{\overline{\textnormal{span}}}}
\newcommand{\reg}{\textnormal{reg}}
\newcommand{\gae}{\lower 2pt \hbox{$\, \buildrel {\scriptstyle >}\over {\scriptstyle
\sim}\,$}}
\newcommand{\lae}{\lower 2pt \hbox{$\, \buildrel {\scriptstyle <}\over {\scriptstyle
\sim}\,$}}
\newcommand{\MU}[1]{
\setbox0\hbox{$#1$}
\setbox1\hbox{$W$}
\ifdim\wd0>\wd1 #1^{\sim} \else \widetilde{#1} \fi
}
\begin{document}
\title{Ideals in Graph Algebras}

	\author{Efren Ruiz}
        \address{Department of Mathematics\\University of Hawaii,
Hilo\\200 W. Kawili St.\\
Hilo, Hawaii\\
96720-4091 USA}
        \email{ruize@hawaii.edu}
        
         \author{Mark Tomforde}
        \address{Department of Mathematics\\University of Houston\\
Houston, Texas\\
77204- 3008, USA}
        \email{tomforde@math.uh.edu}
        
\date{\today}

	\keywords{Graph algebras, graph $C^*$-algebras, Leavitt path algebras, gauge-invariant ideals, graded ideals}
	\subjclass[2000]{Primary: 46L55, 16D25}

\thanks{The second author was supported by a grant from the Simons Foundation (\#210035 to Mark Tomforde)}

\date{\today}

\begin{abstract}

We show that the graph construction used to prove that a gauge-invariant ideal of a graph $C^*$-algebra is isomorphic to a graph $C^*$-algebra, and also used to prove that a graded ideal of a Leavitt path algebra is isomorphic to a Leavitt path algebra, is incorrect as stated in the literature.  We give a new graph construction to remedy this problem, and prove that it can be used to realize a gauge-invariant ideal (respectively, a graded ideal) as a graph $C^*$-algebra (respectively, a Leavitt path algebra). 
\end{abstract}

\maketitle

%%%%%%%%%%%%%%%%%%%%%%%%%%%%%%%%%%%%%%%%%%%%%%%%%%%%%%
\section{Introduction}
%%%%%%%%%%%%%%%%%%%%%%%%%%%%%%%%%%%%%%%%%%%%%%%%%%%%%%

An important, and often quoted result, in the theory of graph $C^*$-algebras is that a gauge-invariant ideal of a graph $C^*$-algebra is isomorphic to a graph $C^*$-algebra.  Likewise, an analogous result in the theory of Leavitt path algebras states that a graded ideal of a Leavitt path algebra is isomorphic to a Leavitt path algebra.  Unfortunately, it has recently been determined that the proofs of these results in the existing literature are incorrect, and moreover, the graph constructed to realize such ideals as graph $C^*$-algebras or Leavitt path algebras, does not work as intended.  The purpose of this paper is to rectify this problem by giving a new graph construction that allows one to realize a gauge-invariant ideal in a graph $C^*$-algebra as a graph $C^*$-algebra, as well as realize a graded ideal in a Leavitt path algebra as a Leavitt path algebra.  Thus we show that the often quoted results about gauge-invariant ideals (respectively, graded ideals) being graph $C^*$-algebras (respectively, Leavitt path algebras) are indeed true --- but the graph used to realize them as such is not the graph that has been previously described in the literature.

If $E$ is a graph, then the gauge-invariant ideals of the graph $C^*$-algebra $C^*(E)$ are in one-to-one correspondence with the admissible pairs $(H,S)$, consisting of a saturated hereditary subset $H$ and a set $S$ of breaking vertices of $H$.  Likewise, for any field $K$, the graded ideals of the Leavitt path algebra $L_K(E)$ are in one-to-one correspondence with the admissible pairs $(H,S)$.  In \cite[Definition~1.4]{DHS}, Deicke, Hong, and Szyma\'nski describe a graph ${}_HE_S$ formed from $E$ and a choice of an admissible pair $(H,S)$, and in \cite[Lemma~1.6]{DHS} they claim that the graph $C^*$-algebra $C^*({}_HE_S)$ is isomorphic to the gauge-invariant ideal $I_{(H,S)}$ corresponding to $(H,S)$.  In \cite[Proposition~3.7]{ARS} it is claimed that for any field $K$ the Leavitt path algebra $L_K({}_HE_S)$ is isomorphic to the graded ideal $I_{(H,S)}$ contained in $L_K(E)$, and the proof they give is modeled after the proof of  \cite[Lemma~1.6]{DHS}.  In \cite[\S1]{Ran} Rangaswamy points out that there is an error in the proofs of both \cite[Lemma~1.6]{DHS} and  \cite[Proposition~3.7]{ARS}, and in particular the argument that the proposed isomorphism is surjective is flawed in both cases.  (According to Rangaswamy, these errors were brought to his attention by John Clark and Iain Dangerfield.)

In this paper we seek to rectify these problems.  After some preliminaries in Section~\ref{prelim-sec}, we continue in Section~\ref{problem-sec} to show that \cite[Lemma~1.6]{DHS} and \cite[Proposition~3.7]{ARS} are not true as stated, and we produce counterexamples to help the reader understand where the problem occurs.  Specifically, we exhibit a graph $E$ with an admissible pair $(H,S)$ such that the gauge-invariant ideal $I_{(H,S)}$ in $C^*(E)$ is not isomorphic to $C^*({}_HE_S)$, and the graded ideal $I_{(H,S)}$ in $L_K(E)$ is not isomorphic to $L_K({}_HE_S)$.  In Section ~\ref{new-construction-sec} we describe a new way to construct a graph from a pair $(H,S)$, and we denote this graph by $\overline{E}_{(H,S)}$.  In Section~\ref{graph-alg-sec} we prove that if $I_{(H,S)}$ is a gauge-invariant ideal of $C^*(E)$, then $I_{(H,S)}$ is isomorphic to $C^*(\overline{E}_{(H,S)})$.  In Section~\ref{LPA-alg-sec} we prove that if $I_{(H,S)}$ is a graded ideal of $L_K(E)$, then $I_{(H,S)}$ is isomorphic to $L_K(\overline{E}_{(H,S)})$.  The theorems of Section~\ref{graph-alg-sec} and Section~\ref{LPA-alg-sec} show that indeed every gauge-invariant ideal of a graph $C^*$-algebra is isomorphic to a graph $C^*$-algebra, and every graded ideal of a Leavitt path algebra is isomorphic to a Leavitt path algebra.

We mention that when $S = \emptyset$ (which always occurs if $E$ is row-finite), then our graph $\overline{E}_{(H,S)}$ is the same as the graph ${}_HE_S$ of Deicke, Hong, and Szyma\'nski.  Thus  \cite[Lemma~1.6]{DHS} and \cite[Proposition~3.7]{ARS} (and their proofs) are valid when $S = \emptyset$.

%%%%%%%%%%%%%%%%%%%%%%%%%%%%%%%%%%%%%%%%%%%%%%%%%%%%%%
\section{Preliminaries} \label{prelim-sec}
%%%%%%%%%%%%%%%%%%%%%%%%%%%%%%%%%%%%%%%%%%%%%%%%%%%%%%

In this section we establish notation and recall some standard definitions.

\begin{definition}
A \emph{graph} $(E^0, E^1, r, s)$ consists of a set $E^0$ of vertices, a set $E^1$ of edges, and maps $r : E^1 \to E^0$ and $s : E^1 \to E^0$ identifying the range and source of each edge.  \end{definition}

\begin{definition}
Let $E := (E^0, E^1, r, s)$ be a graph. We say that a vertex $v
\in E^0$ is a \emph{sink} if $s^{-1}(v) = \emptyset$, and we say
that a vertex $v \in E^0$ is an \emph{infinite emitter} if
$|s^{-1}(v)| = \infty$.  A \emph{singular vertex} is a vertex that
is either a sink or an infinite emitter, and we denote the set of
singular vertices by $E^0_\textnormal{sing}$.  We also let
$E^0_\textnormal{reg} := E^0 \setminus E^0_\textnormal{sing}$, and
refer to the elements of $E^0_\textnormal{reg}$ as \emph{regular
vertices}; i.e., a vertex $v \in E^0$ is a regular vertex if and
only if $0 < |s^{-1}(v)| < \infty$.  A graph is \emph{row-finite}
if it has no infinite emitters.  A graph is \emph{finite} if both
sets $E^0$ and $E^1$ are finite (or equivalently, when $E^0$ is
finite and $E$ is row-finite).
\end{definition}

\begin{definition}
If $E$ is a graph, a \emph{path} is a sequence $\alpha := e_1 e_2
\ldots e_n$ of edges with $r(e_i) = s(e_{i+1})$ for $1 \leq i \leq
n-1$.  We say the path $\alpha$ has \emph{length} $| \alpha| :=n$,
and we let $E^n$ denote the set of paths of length $n$.  We
consider the vertices in $E^0$ to be paths of length zero.  We
also let $E^* := \bigcup_{n=0}^\infty E^n$ denote the paths of
finite length in $E$, and we extend the maps $r$ and $s$ to $E^*$
as follows: For $\alpha = e_1 e_2 \ldots e_n \in E^n$ with $n\geq
1$, we set $r(\alpha) = r(e_n)$ and $s(\alpha) = s(e_1)$; for
$\alpha = v \in E^0$, we set $r(v) = v = s(v)$.  A \emph{cycle} is a path $\alpha=e_1 e_2 \ldots e_n$ with length $|\alpha| \geq 1$ and $r(\alpha) = s(\alpha)$.  If $\alpha = e_1e_2 \ldots e_n$ is a cycle, an \emph{exit} for $\alpha$ is an edge $f \in E^1$ such that $s(f) = s(e_i)$ and $f \neq e_i$ for some $i$.  
\end{definition}

\begin{definition} \label{graph-C*-def}
If $E$ is a graph, the \emph{graph $C^*$-algebra} $C^*(E)$ is the universal
$C^*$-algebra generated by mutually orthogonal projections $\{ p_v
: v \in E^0 \}$ and partial isometries with mutually orthogonal
ranges $\{ s_e : e \in E^1 \}$ satisfying
\begin{enumerate}
\item $s_e^* s_e = p_{r(e)}$ \quad  for all $e \in E^1$
\item $s_es_e^* \leq p_{s(e)}$ \quad for all $e \in E^1$
\item $p_v = \sum_{\{ e \in E^1 : s(e) = v \}} s_es_e^* $ \quad for all $v \in E^0_\textnormal{reg}$.
\end{enumerate}
\end{definition}
\begin{definition} We call Conditions (1)--(3) in Definition~\ref{graph-C*-def} the \emph{Cuntz-Krieger relations}.  Any collection $\{ S_e, P_v : e \in E^1, v \in E^0 \}$ where the $P_v$ are mutually orthogonal projections, the $S_e$ are partial isometries with mutually orthogonal ranges, and the Cuntz-Krieger relations are satisfied is called a \emph{Cuntz-Krieger $E$-family}.  For a path $\alpha := e_1 \ldots e_n$, we define $S_\alpha := S_{e_1} \ldots S_{e_n}$ and when $|\alpha| = 0$, we have $\alpha = v$ is a vertex and define $S_\alpha := P_v$.
\end{definition}

\begin{definition} \label{LPA-lin-invo-def}
Let $E$ be a graph, and let $K$ be a field. We let $(E^1)^*$
denote the set of formal symbols $\{ e^* : e \in E^1 \}$.  The \emph{Leavitt path
algebra of $E$ with coefficients in $K$}, denoted $L_K(E)$,  is
the free associative $K$-algebra generated by a set $\{v : v \in
E^0 \}$ of pairwise orthogonal idempotents, together with a set
$\{e, e^* : e \in E^1\}$ of elements, modulo the ideal generated
by the following relations:
\begin{enumerate}
\item $s(e)e = er(e) =e$ for all $e \in E^1$
\item $r(e)e^* = e^* s(e) = e^*$ for all $e \in E^1$
\item $e^*f = \delta_{e,f} \, r(e)$ for all $e, f \in E^1$
\item $v = \displaystyle \sum_{\{e \in E^1 : s(e) = v \}} ee^*$ whenever $v \in E^0_\reg$.
\end{enumerate}
\end{definition}

\begin{definition}
Any collection $\{ S_e, S_e^*, P_v : e \in E^1, v \in E^0 \}$ where the $P_v$ are mutually orthogonal idempotents, and relations (1)--(4) in Definition~\ref{LPA-lin-invo-def} are satisfied is called a \emph{Leavitt $E$-family}. For a path $\alpha = e_1 \ldots e_n \in E^n$ we define $\alpha^* := e_n^*
e_{n-1}^* \ldots e_1^*$.  We also define $v^* = v$ for all $v \in
E^0$. 
\end{definition}

If $E$ is a graph, a subset $H \subseteq E^0$ is \emph{hereditary} if whenever $e \in E^1$ and $s(e) \in H$, then $r(e) \in H$.  A hereditary subset $H$ is called \emph{saturated} if $\{ v \in E^0_\reg : r(s^{-1}(v)) \subseteq H \} \subseteq H$.  For any saturated hereditary subset $H$, the \emph{breaking vertices of $H$} are the elements of the set $$B_H := \{ v \in E^0 : |s^{-1}(v)| = \infty \text{ and } 0 < | s^{-1}(v) \cap r^{-1}(E^0 \setminus H) | < \infty \}.$$  
If $\{ s_e, p_v : e \in E^1, v \in E^0 \}$ is a Cuntz-Krieger $E$-family, then for any $v \in B_H$, we define the \emph{gap projection} corresponding to $v$ to be $$p_v^H :=  p_v - \sum_{{s(e) = v} \atop {r(e) \notin H}} s_es_e^*.$$  Likewise, if $\{e, e^*, v : e \in E^1, v \in E^0 \}$ is a Leavitt $E$-family, then for any $v \in B_H$, we define the \emph{gap idempotent} corresponding to $v$ to be $$v^H := v - \sum_{{s(e) = v} \atop {r(e) \notin H}} ee^*.$$  

If $E$ is a graph, an \emph{admissible pair} $(H,S)$ consists of a saturated hereditary subset $H$ and a subset $S \subseteq B_H$ of breaking vertices for $H$.  When working with the graph $C^*$-algebra $C^*(E)$, whenever $(H,S)$ is an admissible pair, we let $I_{(H,S)}$ denote the closed two-sided ideal of $C^*(E)$ generated by $\{ p_v : v \in H \} \cup \{ p_v^H : v \in S \}$.  It is not hard to show that in this case
\begin{align*}
I_{(H,S)} = \clspan \big( \{ s_\alpha s_\beta^* : \alpha, & \beta \in E^*, \  r(\alpha) = r( \beta) \in H \} \\
&\cup \{ s_\alpha p_v^H s_\beta^* : \alpha, \beta \in E^*, r(\alpha) = r( \beta ) = v \in S \} \big).
\end{align*}
and from this equality, one can see that $I_{(H,S)}$ is a gauge-invariant ideal in $C^*(E)$.  It was proven in \cite[Theorem~3.6]{BHRS} that every gauge-invariant ideal of $C^*(E)$ has the form $I_{(H,S)}$ for an admissible pair $(H,S)$.

When working with the Leavitt path algebra $L_K(E)$, whenever $(H,S)$ is an admissible pair, we let $I_{(H,S)}$ denote the two-sided ideal of $L_K(E)$ generated by $\{ v : v \in H \} \cup \{ v^H : v \in S \}$.  It is not hard to show that in this case
\begin{align*}
I_{(H,S)} = \operatorname{span}_K \big( \{ \alpha \beta^* : \alpha, & \beta \in E^*, \  r(\alpha) = r( \beta) \in H \} \\
&\cup \{ \alpha v^H \beta^* : \alpha, \beta \in E^*, r(\alpha) = r( \beta ) = v \in S \} \big).
\end{align*}
and from this equality, one can see that $I_{(H,S)}$ is a graded ideal in $L_K(E)$.  It was proven in 
\cite[Theorem~5.7]{Tom10} that every graded ideal of $L_K(E)$ has the form $I_{(H,S)}$ for an admissible pair $(H,S)$.

%%%%%%%%%%%%%%%%%%%%%%%%%%%%%%%%%%%%%%%%%%%%%%%%%%%%%%
\section{Problems with Previous Results in the Literature} \label{problem-sec}
%%%%%%%%%%%%%%%%%%%%%%%%%%%%%%%%%%%%%%%%%%%%%%%%%%%%%%

In this section we provide counterexamples to \cite[Lemma~1.6]{DHS} and \cite[Proposition~3.7]{ARS}.  Let us recall the definition of the graph ${}_HE_S$.

\begin{definition}[Definition~1.4 of \cite{DHS}] \label{Old-graph-def}
Let $E$ be a directed graph, and let $(H,S)$ be an admissible pair with $H \neq \emptyset$.  Let $\widetilde{F}_E(H,S)$ denote the collection of all paths of positive length $\alpha := e_1 \ldots e_n$ in $E$ with $s(\alpha) \notin H$, $r(\alpha) \in H \cup S$, and $r(e_i) \notin H \cup S$ for $1 \leq i \leq n-1$.  We also define 
$$F_E(H,S) = \widetilde{F}_E(H,S) \setminus \{ e \in E^1 : s(e) \in S \text{ and } r(e) \in H \}.$$  We let $\overline{F}_E(H,S)$ denote another copy of $F_E(H,S)$, and if $\alpha \in F_E(H,S)$ we write $\overline{\alpha}$ for the copy of $\alpha$ in $\overline{F}_E(H,S)$.

We then define ${}_HE_S$ to be the graph with
\begin{align*}
{}_HE_S^0 &:= H \cup S \cup F_E(H,S) \\
{}_HE_S^1 &:= \{ e \in E^1 : s(e) \in H \} \cup \{ e \in E^1 : s(e) \in S \text{ and } r(e) \in H\} \cup \overline{F}_E(H,S)
\end{align*}
and we extend $s$ and $r$ to ${}_HE_S^1$ by defining $s(\overline{\alpha}) = \alpha$ and $r(\overline{\alpha}) = r(\alpha)$ for $\overline{\alpha} \in  \overline{F}_E(H,S)$.
\end{definition}

\begin{example} \label{E-graph-ex}
Let $E$ be the graph
$$
\xymatrix{
v \ar@/^/[rr]^e  \ar@{=>}[rd]_{\infty} & & w \ar@/^/[ll]^f  \ar@{=>}[ld]^{\infty} \\
& x & \\
}
$$
and let $H := \{ x \}$ and $S := \{v, w \}$.  Then $H$ is a saturated hereditary subset, $S \subseteq B_H$, and $(H,S)$ is an admissible pair.

Using Definition~\ref{Old-graph-def} we see that $F_E(H,S) = \{ e, f \}$, and the graph ${}_HE_S$ is given by
$$
\xymatrix{
f \ar[d]_{\overline{f}} & & e \ar[d]^{\overline{e}} \\
v  \ar@{=>}[rd]_{\infty} & & w  \ar@{=>}[ld]^{\infty} \\
& x & \\
}
$$
\end{example}

\begin{remark}[Counterexample to Lemma~1.6 of \cite{DHS}] \label{C*-counterexample-rem}
ÊLet $E$ be the graph of Example~\ref{E-graph-ex}.  If we consider the graph $C^*$-algebra $C^*(E)$, and let $I_{(H,S)}$ be the gauge-invariant ideal in $C^*(E)$ corresponding to $(H,S)$, then $I_{(H,S)}$ is not isomorphic to $C^*({}_HE_S)$.  To see why this is true, suppose for the sake of contradiction, that $I_{(H,S)}$ is isomorphic to $C^*({}_HE_S)$.  Since $E$ has a finite number of vertices, $C^*(E)$ is unital.  Likewise, since ${}_HE_S$ has a finite number of vertices $C^*({}_HE_S)$ is unital, and hence $I_{(H,S)}$ is unital. Therefore $C^*(E) \cong I_{(H,S)} \oplus J$ for some ideal $J$ of $C^*(E)$.  If we let $H' := \{ v \in E^0 : p_v \in J \}$, then $H'$ is a saturated hereditary subset of $E^0$.  Since $E$ satisfies Condition~(L), the Cuntz-Krieger uniqueness theorem implies that $H'$ is nonempty.  However, any nonempty saturated hereditary subset of $E^0$ contains $x$, and thus $x \in H'$ and $p_x \in J$.  Hence $p_x \in I_{(H,S)} \cap J$, and $ I_{(H,S)} \cap J \neq 0$, contradicting the fact that $C^*(E) \cong I_{(H,S)} \oplus J$.
\end{remark}

\begin{remark}[Counterexample to Proposition~3.7 of \cite{ARS}]
Let $E$ be the graph of Example~\ref{E-graph-ex}. If $K$ is a field and we consider the Leavitt path algebra $L_K(E)$, and let $I_{(H,S)}$ be the graded ideal in $L_K(E)$ corresponding to $(H,S)$, then an argument very similar to the one given in Remark~\ref{C*-counterexample-rem} shows that $I_{(H,S)}$ is not isomorphic to $L_K({}_HE_S)$.  
\end{remark}

\begin{remark}
The only problem with the proofs of \cite[Lemma~1.6]{DHS} and \cite[Proposition~3.7]{ARS} is the verification of surjectivity.  In particular, there is an injection of $C^*({}_HE_S)$ onto a $C^*$-subalgebra of $I_{(H,S)}$ and there is an injection of $L_K({}_HE_S)$ onto a subalgebra of $I_{(H,S)}$.  However, our example shows that in general these maps are not onto.
\end{remark}

\begin{remark}
When $S = \emptyset$ (which always occurs if $E$ is row-finite), then our graph $\overline{E}_{(H,S)}$ is the same as the graph ${}_HE_S$ of Deicke, Hong, and Szyma\'nski.  Thus  \cite[Lemma~1.6]{DHS} and \cite[Proposition~3.7]{ARS} (and their proofs) are valid when $S = \emptyset$, and in particular are valid for row-finite graphs.
\end{remark}

%%%%%%%%%%%%%%%%%%%%%%%%%%%%%%%%%%%%%%%%%%%%%%%%%%%%%%
\section{A New Graph Construction} \label{new-construction-sec}
%%%%%%%%%%%%%%%%%%%%%%%%%%%%%%%%%%%%%%%%%%%%%%%%%%%%%%

\begin{definition}
Let $E = (E^0, E^1, r, s)$ be a graph.  If $H \subseteq E^0$ is a saturated hereditary subset of vertices and $S \subseteq B_H$ is a subset of breaking vertices for $H$, we define 
$$F_1(H,S) := \{ \alpha \in E^* : \alpha = e_1 \ldots e_n \text{ with } r(e_n) \in H \text{ and } s(e_n) \notin H \cup S \}$$
and
$$F_2(H,S) := \{ \alpha \in E^* : |\alpha| \geq 1 \text{ and } r(\alpha) \in S \}$$
Note that the paths in each of $F_1(H,S)$ and $F_2(H,S)$ must be of length $1$ or greater, and  $F_1(H,S) \cap F_2(H,S) = \emptyset$.  For $i=1,2$, let $\overline{F}_i(H,S)$ denote another copy of $F_i(H,S)$ and write $\overline{\alpha}$ for the copy of $\alpha$ in $F_i(H,S)$.

Define $\overline{E}_{(H,S)}$ to be the graph with
\begin{align*}
\overline{E}_{(H,S)}^0 &:= H \cup S \cup F_1(H,S) \cup F_2(H,S) \\
\overline{E}_{(H,S)}^1 &:= \{e \in E^1 : s(e) \in H \} \cup \{ e \in E^1 : s(e) \in S \text{ and } r(e) \in H \} \\
& \qquad \qquad    \cup \overline{F}_1(H,S) \cup \overline{F}_2(H,S)
\end{align*}
and we extend $s$ and $r$ to $\overline{E}_{(H,S)}^1$ by defining $s(\overline{\alpha}) = \alpha$ and $r(\overline{\alpha}) = r(\alpha)$ for $\overline{\alpha} \in \overline{F}_1(H,S) \cup \overline{F}_2(H,S)$.
\end{definition}

\begin{remark} \label{cycles-come-from-remark}
One very useful aspect of our construction is that the only cycles in $\overline{E}_{(H,S)}$ are paths that are also cycles in $E$.  Thus our construction does not create any additional cycles, and there is an obvious one-to-one correspondence between cycles in $E$ and cycles in $\overline{E}_{(H,S)}$.
\end{remark}

Note that if $S = \emptyset$, then $F_2(H,S) = \emptyset$ and $F_1(H,S) = \{ \alpha \in E^* : \alpha = e_1 \ldots e_n \text{ with }  r(e_n) \in H \text{ and } s(e_{n}) \notin H \}$, and in this case $\overline{E}_{(H,S)}$ is equal to ${}_HE_S$.  Thus our construction coincides with the construction of Deicke, Hong, and Szyma\'nski when $S = \emptyset$, and in particular, whenever $E$ is a row-finite graph.  When $S \neq \emptyset$, our construction is not necessarily the same, as the following example shows.

\begin{example}
Let $E$ be the graph of Example~\ref{E-graph-ex}.  Let $H := \{x \}$ and $S := \{v, w \}$.  Then $F_1(H,S) = \emptyset$ and $F_2(H,S) = \{ e, f, fe, ef, efe, fef, \ldots \}$. The graph $\overline{E}_{(H,S)}$ is given by
$$
\xymatrix{
\cdots \ fef \ar[rrd]_<>(.2){\overline{fef}} &  ef \ar[rd]_<>(.0){\overline{ef}}
 & f \ar[d]_<>(.4){\overline{f}} & & e \ar[d]^<>(.4){\overline{e}} & fe \ar[ld]^<>(.0){\overline{fe}} & efe \ \cdots  \ar[lld]^<>(.2){\overline{efe}} \\
& & v  \ar@{=>}[rd]_{\infty} & & w \ar@{=>}[ld]^{\infty} & & \\
& & & x & & & \\
}
$$
and we note, in particular, that $\overline{E}_{(H,S)}$ has an infinite number of vertices.
\end{example}

%%%%%%%%%%%%%%%%%%%%%%%%%%%%%%%%%%%%%%%%%%%%%%%%%%%%%%
\section{Ideals of Graph $C^*$-algebras} \label{graph-alg-sec}
%%%%%%%%%%%%%%%%%%%%%%%%%%%%%%%%%%%%%%%%%%%%%%%%%%%%%%

\begin{theorem} \label{C*-ideal-is-graph-alg-thm}
Let $E$ be a graph, and suppose that $H \subseteq E^0$ is a saturated hereditary subset of vertices and $S \subseteq B_H$ is a subset of breaking vertices for $H$.  Then the gauge-invariant ideal $I_{(H,S)}$ in $C^*(E)$ is isomorphic to $C^*(\overline{E}_{(H,S)})$.
\end{theorem}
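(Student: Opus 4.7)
The plan is to construct a Cuntz--Krieger $\overline{E}_{(H,S)}$-family inside $I_{(H,S)} \subseteq C^*(E)$, invoke the universal property of $C^*(\overline{E}_{(H,S)})$ to obtain a $*$-homomorphism $\phi\colon C^*(\overline{E}_{(H,S)}) \to I_{(H,S)}$, and then show $\phi$ is an isomorphism. Guided by the spanning description of $I_{(H,S)}$ recalled in the preliminaries, I would set $Q_v := p_v$ for $v \in H$, $Q_v := p_v^H$ for $v \in S$, $Q_\alpha := s_\alpha s_\alpha^*$ for $\alpha \in F_1(H,S)$, and $Q_\alpha := s_\alpha p_{r(\alpha)}^H s_\alpha^*$ for $\alpha \in F_2(H,S)$; and $T_e := s_e$ for edges $e \in E^1$ of the first two types, $T_{\overline{\alpha}} := s_\alpha$ for $\overline{\alpha} \in \overline{F}_1(H,S)$, and $T_{\overline{\alpha}} := s_\alpha p_{r(\alpha)}^H$ for $\overline{\alpha} \in \overline{F}_2(H,S)$.

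Verifying the Cuntz--Krieger relations is mostly routine case analysis that relies on $B_H \cap H = \emptyset$ (so $S$ and $H$ are disjoint) and the hereditary property of $H$. The subtle point is orthogonality of the range projections $Q_\alpha, Q_\beta$ when $\alpha, \beta \in F_1 \cup F_2$ are in a prefix relation $\alpha = \beta \gamma$; a case analysis shows the continuing edge $\gamma_1$ must satisfy $r(\gamma_1) \notin H$, so the gap projection appearing inside $Q_\beta$ annihilates $s_{\gamma_1}$, forcing $Q_\alpha Q_\beta = 0$. Relation~(3) is straightforward: regular vertices in $H$ inherit the relation from $C^*(E)$; vertices in $F_1 \cup F_2$ each emit the single edge $\overline{\alpha}$ with $T_{\overline{\alpha}} T_{\overline{\alpha}}^* = Q_\alpha$ by construction; and each $v \in S$ is an infinite emitter in $\overline{E}_{(H,S)}$ (since $v \in B_H$ has infinitely many edges into $H$), so the relation is vacuous.

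Surjectivity follows by decomposing the spanning elements. Given $s_\mu s_\nu^*$ with $r(\mu) = r(\nu) \in H$, take $e_k$ to be the first edge of $\mu$ whose range lies in $H$: if $s(e_k) \notin H \cup S$ then $\mu_{[1,k]} \in F_1$ and $s_{\mu_{[1,k]}} = \phi(T_{\overline{\mu_{[1,k]}}})$, while if $s(e_k) \in S$ then $\mu_{[1,k-1]}$ is either a vertex of $S$ or an element of $F_2$, and $\phi(T_{\overline{\mu_{[1,k-1]}}} T_{e_k}) = s_{\mu_{[1,k]}}$ because the gap projection fixes $s_{e_k}$ whenever $r(e_k) \in H$. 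The tail $\mu_{[k+1,n]}$ lies in $H$, so each of its edges is a first-type edge of $\overline{E}_{(H,S)}$. For $s_\mu p_v^H s_\nu^*$ with $r(\mu) = r(\nu) = v \in S$, either $\mu = v$ and $s_\mu p_v^H = \phi(Q_v)$, or $\mu \in F_2$ and $s_\mu p_v^H = \phi(T_{\overline{\mu}})$; the two factors $p_v^H$ collapse into one via $(p_v^H)^2 = p_v^H$.

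The main obstacle is injectivity, since $\phi(T_{\overline{\alpha}})$ is a product of $|\alpha|$ partial isometries in $C^*(E)$ and hence $\phi$ does not intertwine the standard gauge actions on source and target. I would therefore define the modified $\mathbb{T}$-action $\eta$ on $C^*(\overline{E}_{(H,S)})$ fixing each $Q_w$, scaling each $T_e$ (for $e \in E^1$) by $z$, and scaling each $T_{\overline{\alpha}}$ by $z^{|\alpha|}$. By construction $\phi$ intertwines $\eta$ with the standard gauge action of $C^*(E)$ restricted to the gauge-invariant ideal $I_{(H,S)}$. Each $Q_w$ is a nonzero projection in $C^*(E)$, so a generalized gauge-invariant uniqueness theorem (which applies because the integer weights $|\alpha| \geq 1$ on the $\overline{\alpha}$-edges still yield an AF fixed-point algebra under $\eta$, so that averaging over $\mathbb{T}$ gives a faithful conditional expectation) gives injectivity and completes the proof.
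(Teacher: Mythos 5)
Your construction of the Cuntz--Krieger $\overline{E}_{(H,S)}$-family $\{T_e,Q_v\}$ and your surjectivity argument coincide with the paper's proof: the same formulas for $Q_v$ and $T_e$, the same orthogonality observations about prefix relations among elements of $F_1\cup F_2$, and the same decomposition of the spanning elements $s_\mu s_\nu^*$ and $s_\mu p_v^H s_\nu^*$ according to where $\mu$ first enters $H\cup S$. Where you genuinely diverge is injectivity. The paper applies Szyma\'nski's general Cuntz--Krieger uniqueness theorem: one checks that each $Q_v\neq 0$ and that every vertex-simple cycle without exits in $\overline{E}_{(H,S)}$ maps to a unitary with full spectrum, which is immediate because every cycle in $\overline{E}_{(H,S)}$ is already a cycle in $E$ lying in $H$. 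Your route, via the weighted action $\eta$ intertwined with the restriction of the gauge action of $C^*(E)$, is a legitimate alternative, and the intertwining does hold (the gap projections $p_v^H$ are gauge-fixed, so $\gamma_z(T_{\overline{\alpha}})=z^{|\alpha|}T_{\overline{\alpha}}$).

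However, as written your injectivity step has a soft spot. Averaging over $\eta$ gives a faithful conditional expectation onto the fixed-point algebra $C^*(\overline{E}_{(H,S)})^\eta$ for \emph{any} continuous $\mathbb{T}$-action --- this has nothing to do with the fixed-point algebra being AF --- and faithfulness of that expectation only reduces injectivity of $\phi$ to injectivity of $\phi|_{C^*(\overline{E}_{(H,S)})^\eta}$; it does not supply it. The real content of any gauge-invariant uniqueness theorem is precisely that last step: one must exhibit $C^*(\overline{E}_{(H,S)})^\eta$ as an increasing union of finite-dimensional (or elementary) subalgebras spanned by elements $t_\mu t_\nu^*$ with equal total $\eta$-weight, identify the minimal projections of each piece, and check that $\phi$ does not annihilate them using $Q_v\neq 0$. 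For the weighted core this is true (the skew-product graph for a strictly positive cocycle has no cycles, so the core is AF, and the matrix-unit analysis goes through), but it is not an off-the-shelf citation in the form you invoke, and your parenthetical justification conflates the faithfulness of the expectation with injectivity on its range. Either prove this weighted uniqueness statement, or do what the paper does and use Szyma\'nski's generalized Cuntz--Krieger uniqueness theorem together with Remark~\ref{cycles-come-from-remark}, which avoids the core analysis entirely.
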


\begin{proof}
Let $\{ s_e, p_v : e \in E^1, v \in E^0 \}$ be a generating Cuntz-Krieger $E$-family for $C^*(E)$.  For $v \in \overline{E}_{(H,S)}^0$ define
$$Q_v := \begin{cases} 
p_v & \text{ if $v \in H$} \\
p_v^H & \text{ if $v \in S$} \\
s_\alpha s_\alpha^* & \text{ if $v = \alpha \in F_1(H,S)$} \\
s_\alpha p_{r(\alpha)}^H s_\alpha^*& \text{ if $v = \alpha \in F_2(H,S)$} \\
\end{cases}$$
and for $e \in \overline{E}_{(H,S)}^1$ define
$$T_e := \begin{cases} 
s_e & \text{ if $e \in E^1$} \\
s_\alpha & \text{ if $e = \overline{\alpha} \in \overline{F}_1(H,S)$} \\
s_\alpha p_{r(\alpha)}^H & \text{ if $e = \overline{\alpha} \in \overline{F}_2(H,S)$}. \\
\end{cases}$$ 
\noindent Clearly, all of these elements are in $I_{(H,S)}$.  We shall show that $\{ T_e, Q_v : e \in \overline{E}_{(H,S)}^1, v \in \overline{E}_{(H,S)}^0\}$ is a Cuntz-Krieger $\overline{E}_{(H,S)}$-family in $I_{(H,S)}$.  To begin, we see that the $Q_v$'s are mutually orthogonal projections.  In addition, the fact that the $T_e$'s are partial isometries with mutually orthogonal ranges follows from four easily verified observations:
\begin{itemize}
\item[(i)] $F_1(H,S) \cap F_2(H,S) = \emptyset$,
\item[(ii)] an element in $F_{1} (H,S)$ cannot extend an element in $F_{1} (H, S ) \cup F_{2} (H, S )$ (i.e., no element of $F_{1} (H,S)$ has the form $\alpha \beta$ for $\alpha \in F_{1} (H, S ) \cup F_{2} (H, S )$ and a path $\beta$ with $|\beta | \geq 1$),
\item[(iii)] an element in $F_{2} (H,S)$ cannot extend an element in $F_{1} (H , S )$ (i.e., no element of $F_{2} (H,S)$ has the form $\alpha \beta$ for $\alpha \in F_{1} (H, S )$ and a path $\beta$ with $|\beta | \geq 1$), and
\item[(iv)] $p_v^H s_e = 0$ whenever $e \in E^1$ with $s(e) = v$ and $r(e) \notin H$.
\end{itemize}

To see the Cuntz-Krieger relations hold, we consider cases for $e \in \overline{E}_{(H,S)}^1$.  If $e \in E^1$, then $r(e) \in H$ and
$$T_e^* T_e = s_e^*s_e = p_{r(e)} = Q_{r(e)}.$$
If $e = \overline{\alpha} \in \overline{F}_1(H,S)$, then $r(\alpha) \in H$ and
$$T_e^* T_e = T_{\overline{\alpha}}^* T_{\overline{\alpha}} = s_\alpha^*s_\alpha = p_{r(\alpha)} = Q_{r(\alpha)} = Q_{r(\overline{\alpha})} = Q_{r(e)}.$$
If $e = \overline{\alpha} \in \overline{F}_2(H,S)$, then $r(\alpha) \in S$ and
$$T_e^* T_e = T_{\overline{\alpha}}^* T_{\overline{\alpha}} = p_{r(\alpha)}^H s_\alpha^*s_\alpha p_{r(\alpha)}^H = p^H_{r(\alpha)} = Q_{r(\alpha)} = Q_{r(\overline{\alpha})} = Q_{r(e)}.$$
Thus $T_e^* T_e = Q_{r(e)}$ for all $e \in  \overline{E}_{(H,S)}^1$, and the first Cuntz-Krieger relation holds.

For the second Cuntz-Krieger relation, we again let $e \in \overline{E}_{(H,S)}^1$ and consider cases.  If $e \in E^1$ with $s(e) \in H$, then $$Q_{s(e)} T_e = p_{s(e)} s_e = s_e = T_e$$
and if $e \in E^1$ with $s(e) \in S$ and $r(e) \in H$, then $$Q_{s(e)} T_e = p^H_{s(e)} s_e = s_e = T_e.$$ If $e = \overline{\alpha} \in \overline{F}_1(H,S)$, then 
$$Q_{s(e)} T_e = Q_\alpha T_{\overline{\alpha}} = s_\alpha s_\alpha^* s_\alpha = s_\alpha = T_{\overline{\alpha}} = T_e.$$
If $e = \overline{\alpha} \in \overline{F}_2(H,S)$, then 
$$Q_{s(e)} T_e = Q_\alpha T_{\overline{\alpha}} = s_\alpha p_{r(\alpha)}^H s_\alpha^* s_\alpha p_{r(\alpha)}^H  = s_\alpha p_{r(\alpha)}^H = T_{\overline{\alpha}} = T_e.$$
Thus $Q_{s(e)} T_e = T_e$ for all $e \in \overline{E}_{(H,S)}^1$, so that $T_e T_e^* \leq Q_{s(e)}$ for all $e \in 
\overline{E}_{(H,S)}^1$, and the second Cuntz-Krieger relation holds.

For the third Cuntz-Krieger relation, suppose that  $v \in \overline{E}_{(H,S)}^0$ and that $v$ emits a finite number of edges in $\overline{E}_{(H,S)}$.  Since every vertex in $S$ is an infinite emitter in $v \in \overline{E}_{(H,S)}$, we must have $v \in H$, $v \in \overline{F}_1(H,S)$, or $v \in \overline{F}_2(H,S)$.  If $v \in H$, then the set of edges that $v$ emits in $\overline{E}_{(H,S)}$ is equal to the set of edges that $v$ emits in $E$, and hence $$Q_v = p_v = \sum_{ \{e \in E^1 : s(e) = v \} } s_es_e^* = \sum_{ \{e \in \overline{E}_{(H,S)}^1 : s(e) = v\} } T_eT_e^*.$$
If $v \in \overline{F}_1(H,S)$, then $v = \alpha$ with $r(\alpha) \in H$, and the element $\overline{\alpha}$ is the unique edge in $\overline{E}_{(H,S)}^0$ with source $v$, so that $$Q_v = s_\alpha s_\alpha^* = T_e T_e^*.$$
If $v \in \overline{F}_2(H,S)$, then $v = \alpha$ with $r(\alpha) \in S$, and the element $\overline{\alpha}$ is the unique edge in $\overline{E}_{(H,S)}^0$ with source $v$, so that $$Q_v = s_\alpha p_{r(\alpha)}^H s_\alpha^* = s_\alpha p_{r(\alpha)}^ H (s_\alpha p_{r(\alpha)}^H)^* = T_e T_e^*.$$
Thus the third Cuntz-Krieger relation holds, and $\{ T_e, Q_v : e \in \overline{E}_{(H,S)}^1, v \in \overline{E}_{(H,S)}^0\}$ is a Cuntz-Krieger $\overline{E}_{(H,S)}$-family in $I_{(H,S)}$.

If $\{ q_v, t_e : v \in \overline{E}_{(H,S)}^0, \overline{E}_{(H,S)}^1 \}$ is a generating Cuntz-Krieger $\overline{E}_{(H,S)}$-family in $C^*(\overline{E}_{(H,S)})$, then by the universal property of $C^*(\overline{E}_{(H,S)})$ there exists a $*$-homomorphism $\phi : C^*(\overline{E}_{(H,S)}) \to I_{(H,S)}$ with $\phi (q_v) = Q_v$ for all  $v \in \overline{E}_{(H,S)}^0$ and $\phi(t_e) = T_e$ for all  $e \in \overline{E}_{(H,S)}^1$. 

We shall show injectivity of $\phi$ by applying the generalized Cuntz-Krieger uniqueness theorem of \cite{Szy}.  To verify the hypotheses, we first see that if $v \in \overline{E}_{(H,S)}^0$, then $\phi( q_v) = Q_v \neq 0$.  Second, if $e_1 \ldots e_n$ is a vertex-simple cycle in $\overline{E}_{(H,S)}$ with no exits (\emph{vertex-simple} means $s(e_i) \neq s(e_j)$ for $i \neq j$), then since the cycles in $\overline{E}_{(H,S)}$ come from cycles in $E$ (see Remark~\ref{cycles-come-from-remark}), we must have that $e_i \in E^1$ for all $1 \leq i \leq n$, and $e_1 \ldots e_n$ is a cycle in $E$ with no exits.  Thus $\phi(t_{e_1 \ldots e_n}) = \phi(t_{e_1}) \ldots \phi(t_{e_n}) = s_{e_1} \ldots s_{e_n} = s_{e_1 \ldots e_n}$ is a unitary whose spectrum is the entire circle.  It follows from the generalized Cuntz-Krieger uniqueness theorem \cite[Theorem~1.2]{Szy} that $\phi$ is injective.

Next we shall show surjectivity of $\phi$.  Recall that 
\begin{align*}
I_{(H,S)} = \clspan \big( \{ s_\alpha s_\beta^* : \alpha, & \beta \in E^*, \  r(\alpha) = r( \beta) \in H \} \\
&\cup \{ s_\alpha p_v^H s_\beta^* : \alpha, \beta \in E^*, r(\alpha) = r( \beta ) = v \in S \} \big).
\end{align*}
Since $\im \phi$ is a $C^*$-subalgebra, to show $\phi$ maps onto $I_{(H,S)}$ it suffices to prove two things: (1) $s_\alpha \in \im \phi$ when $\alpha \in E^*$ with $r(\alpha) \in H$; and (2) $s_\alpha p_v^H \in \im \phi$ when $\alpha \in E^*$ with $r(\alpha) = v \in S$.  To establish (1), let $\alpha \in E^*$ with $r(\alpha) \in H$.  
Let us write $\alpha$ as a finite sequence of edges in $E^1$.  There are three cases to consider.

\noindent \textsc{Case I:} $\alpha = e_1 \ldots e_n$ with $s(e_1) \in H$.

\noindent Then $s(e_i) \in H$ and $e_i \in \overline{E}_{(H,S)}^1$ for all $1 \leq i \leq n$, so that $$\phi (T_\alpha) = \phi( T_{e_1}) \ldots \phi (T_{e_n}) = s_{e_1} \ldots s_{e_n} = s_\alpha,$$ and $s_\alpha \in \im \phi$.

\smallskip

\smallskip

\noindent \textsc{Case II:} $\alpha = f e_1 \ldots e_n$ with $r(f) = s(e_1) \in H$ and $s(f) \in S$.

\noindent Then $f \in \overline{E}_{(H,S)}^1$.  Likewise, $s(e_i) \in H$ and $e_i \in \overline{E}_{(H,S)}^1$ for all $1 \leq i \leq n$.  Hence $$\phi (T_\alpha) = \phi(T_{f}) \phi( T_{e_1}) \ldots \phi (T_{e_n}) = s_f s_{e_1} \ldots s_{e_n} = s_\alpha,$$ and $s_\alpha \in \im \phi$.

\smallskip

\smallskip

\noindent \textsc{Case III:} $\alpha = f_1 \ldots f_m e_1 \ldots e_n$ with $r(f_m) = s(e_1) \in H$ and $s(f_m) \notin H \cup S$.

\noindent Then $\beta := f_1 \ldots f_m \in F_1(H,S)$ and $e_i \in \overline{E}_{(H,S)}^1$ for all $1 \leq i \leq n$, so that $$\phi (t_{\overline{\beta}} t_{e_1} \ldots t_{e_n}) = T_{\overline{\beta}}  T_{e_1} \ldots T_{e_n} = s_\beta s_{e_1} \ldots s_{e_n} = s_\alpha$$ and $s_\alpha \in \im \phi$.

\smallskip

\smallskip

\noindent \textsc{Case IV:} $\alpha = f_1 \ldots f_m g e_1 \ldots e_n$ with $r(g) = s(e_1) \in H$, $s(g) \in S$, and $m \geq 1$.

\noindent Then $\beta := f_1 \ldots f_m \in F_2(H,S)$, $g \in  \overline{E}_{(H,S)}^1$, and $e_i \in \overline{E}_{(H,S)}^1$ for all $1 \leq i \leq n$, so that 
\begin{align*}
\phi (t_{\overline{\beta}} t_g t_{e_1} \ldots t_{e_n}) &= T_{\overline{\beta}}  T_g T_{e_1} \ldots T_{e_n} = s_\beta p_{s(g)}^H s_g s_{e_1} \ldots s_{e_n} \\
&= s_\beta s_g s_{e_1} \ldots s_{e_n} = s_\alpha.
\end{align*}
Thus $s_\alpha \in \im \phi$.

We have shown that (1) holds.  To show (2), let $\alpha \in E^*$ with $r(\alpha) \in S$.  If $|\alpha| = 0$, then $\alpha = v \in S$, and $\phi( q_v) = Q_v = p_v^H = s_\alpha p_v^H$, so $s_\alpha p_v^H \in \im \phi$.  If $|\alpha| \geq 1$, then $\alpha \in F_2(H,S)$ and $\phi(t_\alpha) = T_\alpha = s_\alpha p_{r (\alpha)}^H$, so that $s_\alpha p_{r (\alpha)}^H \in \im \phi$.  Hence (2) holds, and $\phi$ is surjective.  Thus $\phi$ is the desired $*$-isomorphism from $C^*(\overline{E}_{(H,S)})$ onto $I_{(H,S)}$.
\end{proof}

\begin{corollary} \label{ideals-in-graph-C*-are-graph-C*-cor}
If $E$ is a graph, then every gauge-invariant ideal of $C^*(E)$ is isomorphic to a graph $C^*$-algebra.
\end{corollary}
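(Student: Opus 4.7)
The plan is short because this corollary reduces to combining one structural result with the main theorem of the section. First I would appeal to the classification of gauge-invariant ideals recalled at the end of Section~\ref{prelim-sec}, namely \cite[Theorem~3.6]{BHRS}: every gauge-invariant ideal of $C^*(E)$ arises as $I_{(H,S)}$ for some admissible pair $(H,S)$, where $H \subseteq E^0$ is saturated hereditary and $S \subseteq B_H$. This step does not require any new argument; it is quoted directly from the preliminaries.

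With the ideal expressed in the form $I_{(H,S)}$, I would then invoke Theorem~\ref{C*-ideal-is-graph-alg-thm}, which provides a $*$-isomorphism $I_{(H,S)} \cong C^*(\overline{E}_{(H,S)})$. Since $\overline{E}_{(H,S)}$ is a graph (as constructed in Section~\ref{new-construction-sec}), its $C^*$-algebra is by definition a graph $C^*$-algebra, and the conclusion follows immediately.

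There is no real obstacle here: all of the difficulty is absorbed into Theorem~\ref{C*-ideal-is-graph-alg-thm}, whose proof carries out the construction of the Cuntz-Krieger $\overline{E}_{(H,S)}$-family inside $I_{(H,S)}$ and verifies injectivity via the generalized Cuntz-Krieger uniqueness theorem together with surjectivity via the case analysis on paths with range in $H$ or $S$. The only minor point to confirm while writing the proof is that the correspondence from \cite[Theorem~3.6]{BHRS} genuinely covers \emph{all} gauge-invariant ideals (including those arising from the breaking-vertex contribution $S \neq \emptyset$), which is exactly what is asserted there. Thus the corollary is essentially a one-line consequence of the preceding theorem.
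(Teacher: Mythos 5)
Your proposal is correct and follows exactly the paper's argument: cite \cite[Theorem~3.6]{BHRS} to write the gauge-invariant ideal as $I_{(H,S)}$ for an admissible pair, then apply Theorem~\ref{C*-ideal-is-graph-alg-thm}. Nothing further is needed.
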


\begin{proof}
It follows from \cite[Theorem~3.6]{BHRS} that every gauge-invariant ideal of $C^*(E)$ has the form $I_{(H,S)}$ for some saturated hereditary subset $H \subseteq E^0$ and some subset of breaking vertices $S \subseteq B_H$.
\end{proof}

\begin{corollary}
If $E$ is a graph that satisfies Condition~(K), then every ideal of $C^*(E)$ is isomorphic to a graph $C^*$-algebra.
\end{corollary}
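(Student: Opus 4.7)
The plan is to reduce this corollary to the previous one, Corollary~\ref{ideals-in-graph-C*-are-graph-C*-cor}, by invoking the standard fact that Condition~(K) forces every ideal to be gauge-invariant. Specifically, when $E$ satisfies Condition~(K), every closed two-sided ideal $I$ of $C^*(E)$ is gauge-invariant; equivalently, the quotient $C^*(E)/I$ carries a gauge action compatible with the one on $C^*(E)$, and $I$ is automatically invariant under this action. This is a classical theorem (see, e.g., the Bates--Hong--Raeburn--Szyma\'nski paper \cite{BHRS} or the standard treatment for row-finite graphs due to Kumjian--Pask--Raeburn--Renault, extended to arbitrary graphs) and is the reason Condition~(K) plays the same role for graphs that Condition~(II) plays for Cuntz--Krieger algebras.

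First, I would cite this fact directly: when $E$ satisfies Condition~(K), the correspondence $(H,S) \mapsto I_{(H,S)}$ from admissible pairs to gauge-invariant ideals, described in \cite[Theorem~3.6]{BHRS}, is in fact a bijection onto the lattice of \emph{all} closed two-sided ideals of $C^*(E)$. Next, I would apply Corollary~\ref{ideals-in-graph-C*-are-graph-C*-cor} (equivalently, Theorem~\ref{C*-ideal-is-graph-alg-thm}): every such $I_{(H,S)}$ is isomorphic to $C^*(\overline{E}_{(H,S)})$, which is by definition a graph $C^*$-algebra. Combining these two observations gives that every ideal of $C^*(E)$ is isomorphic to a graph $C^*$-algebra.

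There is no real obstacle here; the corollary is a one-line consequence of Corollary~\ref{ideals-in-graph-C*-are-graph-C*-cor} together with the external result that Condition~(K) implies every ideal is gauge-invariant. If one wanted a self-contained proof, the only nontrivial step would be reproving the latter fact, which typically proceeds by showing that under Condition~(K) the quotient $C^*(E)/I_{(H,S)}$ can be identified with a graph $C^*$-algebra of a quotient graph that again satisfies Condition~(K), and then appealing to the Cuntz--Krieger uniqueness theorem to conclude that any two ideals with the same hereditary saturated data coincide. But since this is already in the literature, the cleanest presentation is simply to cite it and apply the previous corollary.
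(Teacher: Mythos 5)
Your proposal is correct and matches the paper's argument exactly: the paper also deduces from the literature (citing \cite[Theorem~3.6]{BHRS} together with \cite[Theorem~3.5]{DT1}) that Condition~(K) forces every ideal of $C^*(E)$ to be gauge-invariant, and then applies Corollary~\ref{ideals-in-graph-C*-are-graph-C*-cor}. No issues.
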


\begin{proof}
It follows from \cite[Theorem~3.6]{BHRS} and \cite[Theorem~3.5]{DT1} that if the graph $E$ satisfies Condition~(K), then every ideal in $C^*(E)$ is gauge invariant.
\end{proof}

%%%%%%%%%%%%%%%%%%%%%%%%%%%%%%%%%%%%%%%%%%%%%%%%%%%%%%
\section{Ideals of Leavitt Path Algebras} \label{LPA-alg-sec} 
%%%%%%%%%%%%%%%%%%%%%%%%%%%%%%%%%%%%%%%%%%%%%%%%%%%%%%

Using techniques similar to that of Section~\ref{graph-alg-sec} we are able to obtain analogous results for Leavitt path algebras.

\begin{theorem} \label{LPA-ideal-is-graph-alg-thm}
Let $E$ be a graph and let $K$ be a field.  Suppose that $H \subseteq E^0$ is a saturated hereditary subset of vertices and $S \subseteq B_H$ is a subset of breaking vertices for $H$.  Then the graded ideal $I_{(H,S)}$ in $L_K(E)$ is isomorphic to $L_K(\overline{E}_{(H,S)})$.
\end{theorem}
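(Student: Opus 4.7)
My plan is to closely follow the proof of Theorem \ref{C*-ideal-is-graph-alg-thm}, replacing the $C^*$-algebraic arguments with their Leavitt path algebra analogs. Starting from a generating Leavitt $E$-family $\{v, e, e^* : v \in E^0, e \in E^1\}$ of $L_K(E)$, I will define $Q_v \in L_K(E)$ for each $v \in \overline{E}_{(H,S)}^0$ by the same case-by-case formulas used for the $C^*$-algebra projections (namely $v$, $v^H$, $\alpha\alpha^*$, or $\alpha\, r(\alpha)^H \alpha^*$), and $T_e \in L_K(E)$ for $e \in \overline{E}_{(H,S)}^1$ by the analogous recipe ($e$, $\alpha$, or $\alpha\, r(\alpha)^H$), setting $T_e^*$ to be the involution $(T_e)^*$ in $L_K(E)$. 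All of these elements lie in $I_{(H,S)}$. Mimicking the $C^*$ verification, the observations (i)--(iv) from the proof of Theorem \ref{C*-ideal-is-graph-alg-thm} yield mutual orthogonality of the $Q_v$ together with $T_f^* T_e = \delta_{e,f} Q_{r(e)}$; relations (1)--(2) of Definition \ref{LPA-lin-invo-def} are direct case-by-case calculations; and relation (4) at each regular vertex is verified exactly as in the $C^*$ proof, using that every regular vertex of $\overline{E}_{(H,S)}$ lies in $H \cup F_1(H,S) \cup F_2(H,S)$ (each $v \in B_H$ still emits infinitely many edges into $H$ and so remains an infinite emitter in $\overline{E}_{(H,S)}$). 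The universal property of $L_K(\overline{E}_{(H,S)})$ then produces a $K$-algebra homomorphism $\phi : L_K(\overline{E}_{(H,S)}) \to I_{(H,S)}$.

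The main obstacle will be injectivity. Notice that $\phi$ is \emph{not} graded with respect to the natural $\mathbb{Z}$-grading (an edge $\overline{\alpha}$ of degree $1$ in $L_K(\overline{E}_{(H,S)})$ is sent to $\alpha$ of degree $|\alpha|$ in $L_K(E)$), so the graded uniqueness theorem is not directly applicable. Instead I plan to invoke the Cuntz--Krieger uniqueness theorem for Leavitt path algebras, the Leavitt-path-algebra analog of the generalized uniqueness theorem \cite[Theorem 1.2]{Szy} used in Theorem \ref{C*-ideal-is-graph-alg-thm}. Its first hypothesis, $\phi(v) \neq 0$ for every $v \in \overline{E}_{(H,S)}^0$, reduces to showing that each of $v$, $v^H$, $\alpha\alpha^*$, and $\alpha\, r(\alpha)^H \alpha^*$ is nonzero in $L_K(E)$, which follows from the standard normal form for Leavitt path algebra elements. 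For the hypothesis concerning cycles without exits, Remark \ref{cycles-come-from-remark} shows that every cycle in $\overline{E}_{(H,S)}$ is a cycle of $E$ lying entirely inside $H$, and since the edges emitted from any vertex of $H$ agree in $E$ and in $\overline{E}_{(H,S)}$, a cycle without exits in $\overline{E}_{(H,S)}$ is also a cycle without exits in $E$. Hence the Laurent-polynomial subalgebra attached to any such cycle is mapped faithfully into $L_K(E)$, completing the verification.

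For surjectivity, I will use the spanning set
\[
I_{(H,S)} = \operatorname{span}_K \big( \{\alpha \beta^* : r(\alpha) = r(\beta) \in H\} \cup \{\alpha\, v^H \beta^* : r(\alpha) = r(\beta) = v \in S\} \big).
\]
Since $\operatorname{im} \phi$ is a $K$-subalgebra closed under the involution (as $\phi$ respects $*$ by construction of the $T_e^*$), it suffices to show that $\alpha \in \operatorname{im} \phi$ whenever $\alpha \in E^*$ with $r(\alpha) \in H$, and that $\alpha\, v^H \in \operatorname{im} \phi$ whenever $\alpha \in E^*$ with $r(\alpha) = v \in S$. The same four-case decomposition of $\alpha$ used in the proof of Theorem \ref{C*-ideal-is-graph-alg-thm} (according to the location of $s(\alpha)$ and the first vertex of $\alpha$ lying outside $H \cup S$) exhibits $\alpha$, respectively $\alpha\, v^H$, as the image under $\phi$ of a product of generators in $L_K(\overline{E}_{(H,S)})$, finishing the proof.
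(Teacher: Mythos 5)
Your proposal is correct and follows essentially the same route as the paper: the paper's proof is itself a sketch that defines exactly the same Leavitt $\overline{E}_{(H,S)}$-family $\{Q_v, T_e, T_e^*\}$, obtains $\phi$ from the universal property, and invokes the algebraic generalized Cuntz--Krieger uniqueness theorem of Aranda Pino--Mart\'in Barquero--Mart\'in Gonz\'alez--Siles Molina (precisely because, as you observe, $\phi$ is not graded), with surjectivity handled by the same four-case decomposition. Your added details --- the verification that the relevant idempotents are nonzero and that $\phi$ is faithful on the Laurent subalgebra of each exit-free cycle (which lies in $H$, where $\phi$ acts as the identity on generators) --- are exactly the checks the paper leaves implicit.
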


\begin{proof}
The proof of this result is very similar to the proof of Theorem~\ref{C*-ideal-is-graph-alg-thm}, using \cite[Theorem~3.7]{AMMS} in place of  \cite[Theorem~1.2]{Szy}, so we will only sketch the argument: Let $\{ e, e^*, v : e \in E^1, v \in E^0 \}$ be a generating Leavitt $E$-family for $L_K(E)$.  For $v \in \overline{E}_{(H,S)}^0$ define
$$Q_v := \begin{cases} 
v & \text{ if $v \in H$} \\
v^H & \text{ if $v \in S$} \\
\alpha \alpha^* & \text{ if $v = \alpha \in F_1(H,S)$} \\
\alpha {r(\alpha)}^H \alpha^*& \text{ if $v = \alpha \in F_2(H,S)$} \\
\end{cases}$$
and for $e \in \overline{E}_{(H,S)}^1$ define
$$T_e := \begin{cases} 
e & \text{ if $e \in E^1$} \\
\alpha & \text{ if $e = \overline{\alpha} \in \overline{F}_1(H,S)$} \\
\alpha {r(\alpha)}^H & \text{ if $e = \overline{\alpha} \in \overline{F}_2(H,S)$}. \\
\end{cases}$$ and $$T_e^* := \begin{cases} 
e^* & \text{ if $e \in E^1$} \\
\alpha^* & \text{ if $e = \overline{\alpha} \in \overline{F}_1(H,S)$} \\
{r(\alpha)}^H \alpha^*  & \text{ if $e = \overline{\alpha} \in \overline{F}_2(H,S)$}. \\
\end{cases} 
$$

\noindent Clearly, all of these elements are in $I_{(H,S)}$, and using an argument nearly identical to that in the proof of Theorem~\ref{C*-ideal-is-graph-alg-thm} we obtain that $\{ T_e, T_e^*, Q_v : e \in \overline{E}_{(H,S)}^1, v \in \overline{E}_{(H,S)}^0\}$ is a Leavitt $\overline{E}_{(H,S)}$-family in $I_{(H,S)}$.  

The universal property of $L_K(\overline{E}_{(H,S)})$ gives an algebra homomorphism $\phi : L_K(\overline{E}_{(H,S)}) \to I_{(H,S)}$, one may apply \cite[Theorem~3.7]{AMMS} to obtain injectivity of $\phi$, and surjectivity of $\phi$ follows from an argument similar to the surjectivity argument in the proof of Theorem~\ref{C*-ideal-is-graph-alg-thm}.  Hence $\phi$ is the desired algebra isomorphism from $L_K(\overline{E}_{(H,S)})$ onto $I_{(H,S)}$.
\end{proof}

\begin{corollary} \label{ideals-in-LPA-are-LPA-cor}
If $E$ is a graph and $K$ is a field, then every graded ideal of $L_K(E)$ is isomorphic to a Leavitt path algebra.
\end{corollary}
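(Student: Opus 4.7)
The plan is to obtain this corollary as an immediate consequence of Theorem~\ref{LPA-ideal-is-graph-alg-thm} together with the classification of graded ideals of a Leavitt path algebra established in \cite[Theorem~5.7]{Tom10}. There is essentially no new content to prove; the work has already been done in the theorem, and the corollary just records what that theorem says about an arbitrary graded ideal (rather than one specified by an admissible pair).

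First, I would invoke \cite[Theorem~5.7]{Tom10}, which was already recalled in Section~\ref{prelim-sec}: every graded (two-sided) ideal $I$ of $L_K(E)$ is of the form $I = I_{(H,S)}$, where $H$ is a saturated hereditary subset of $E^0$ and $S \subseteq B_H$ is a set of breaking vertices for $H$. Thus to a graded ideal $I$ of $L_K(E)$ we may canonically associate an admissible pair $(H,S)$ with $I = I_{(H,S)}$.

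Next, I would apply Theorem~\ref{LPA-ideal-is-graph-alg-thm} to this admissible pair $(H,S)$. That theorem provides an algebra isomorphism $L_K(\overline{E}_{(H,S)}) \cong I_{(H,S)}$. Composing with the equality $I = I_{(H,S)}$ gives an isomorphism $I \cong L_K(\overline{E}_{(H,S)})$, exhibiting $I$ as a Leavitt path algebra (namely the Leavitt path algebra of the graph $\overline{E}_{(H,S)}$ with coefficients in $K$).

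Since every step amounts to citing a previously established result and there are no additional verifications to make, I do not expect any obstacles; the only minor point worth stating explicitly for the reader is that the resulting graph is $\overline{E}_{(H,S)}$ as constructed in Section~\ref{new-construction-sec}, and that $K$ is unchanged as the coefficient field. The corollary can therefore be written in two or three sentences, mirroring the structure of the proof of Corollary~\ref{ideals-in-graph-C*-are-graph-C*-cor} in the $C^*$-algebra case.
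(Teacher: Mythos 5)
Your proposal is correct and is exactly the paper's argument: cite \cite[Theorem~5.7]{Tom10} to write an arbitrary graded ideal as $I_{(H,S)}$ for an admissible pair $(H,S)$, then apply Theorem~\ref{LPA-ideal-is-graph-alg-thm} to conclude $I_{(H,S)} \cong L_K(\overline{E}_{(H,S)})$. No issues.
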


\begin{proof}
It follows from \cite[Theorem~5.7]{Tom10} that every graded ideal of $L_K(E)$ has the form $I_{(H,S)}$ for some saturated hereditary subset $H \subseteq E^0$ and some subset of breaking vertices $S \subseteq B_H$.
\end{proof}

\begin{corollary}
If $E$ is a graph that satisfies Condition~(K), and $K$ is a field, then every ideal of $L_K(E)$ is isomorphic to a Leavitt path algebra.
\end{corollary}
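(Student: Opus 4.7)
The plan is to mirror the proof of the preceding corollary in the graph $C^*$-algebra section and reduce the statement to Corollary~\ref{ideals-in-LPA-are-LPA-cor}. Specifically, I would first invoke the Leavitt path algebra analog of \cite[Theorem~3.5]{DT1}: if $E$ satisfies Condition~(K), then every two-sided ideal of $L_K(E)$ is $\Z$-graded with respect to the canonical grading in which the vertices sit in degree $0$, each $e \in E^1$ in degree $1$, and each $e^* \in (E^1)^*$ in degree $-1$. This Condition~(K) characterization of graded ideals for Leavitt path algebras is already present in the literature (for instance it is implicit in the machinery of \cite{Tom10} used to establish Theorem~5.7 there), and parallels the $C^*$-algebra fact that Condition~(K) forces every closed two-sided ideal of $C^*(E)$ to be gauge-invariant.

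Once every ideal of $L_K(E)$ is known to be graded, Corollary~\ref{ideals-in-LPA-are-LPA-cor} applies verbatim: each ideal has the form $I_{(H,S)}$ for some saturated hereditary $H \subseteq E^0$ and some $S \subseteq B_H$, and then Theorem~\ref{LPA-ideal-is-graph-alg-thm} produces the isomorphism $I_{(H,S)} \cong L_K(\overline{E}_{(H,S)})$. The main (and really only) step that requires care is citing, or verifying, the Condition~(K) implies graded result for $L_K(E)$; once that is in place, the remainder is a one-line deduction and no new graph-theoretic or algebraic construction is needed beyond what has been developed in Sections~\ref{new-construction-sec} and~\ref{LPA-alg-sec}.
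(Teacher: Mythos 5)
Your proposal is correct and follows the paper's argument exactly: reduce to Corollary~\ref{ideals-in-LPA-are-LPA-cor} via the fact that Condition~(K) forces every ideal of $L_K(E)$ to be graded. The only refinement is that this last fact need not be left as ``implicit'' in the literature --- it is stated explicitly as \cite[Theorem~6.16]{Tom10}, which is precisely the reference the paper uses.
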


\begin{proof}
It follows from \cite[Theorem~6.16]{Tom10} that if the graph $E$ satisfies Condition~(K), then every ideal in $L_K(E)$ is graded.
\end{proof}

%%%%%%%%%%%%%%%%%%%%%%%%%%%%%%%%%%%%%%%%%%%%%%%%%%%%%%
\section{Concluding Remarks} \label{concluding-sec} 
%%%%%%%%%%%%%%%%%%%%%%%%%%%%%%%%%%%%%%%%%%%%%%%%%%%%%%

We have looked at the results in the prior literature that have used \cite[Lemma~1.6]{DHS} and \cite[Proposition~3.7]{ARS} and the construction of the graph ${}_HE_S$ to see how these results and their proofs are affected when one instead uses Theorem~\ref{C*-ideal-is-graph-alg-thm} and Theorem~\ref{LPA-ideal-is-graph-alg-thm} and the new construction $\overline{E}_{(H,S)}$.  Fortunately, all the results we found are still true as stated, and in many cases the proofs remain unchanged or need only slight modifications.  (We warn the reader there are undoubtedly some uses of the results we have missed, and so one should check carefully any time \cite[Lemma~1.6]{DHS} and \cite[Proposition~3.7]{ARS} are applied in the literature.)  We summarize our findings here.

We have found uses of \cite[Lemma~1.6]{DHS} and  \cite[Proposition~3.7]{ARS} in the following papers: \cite{AP}, \cite {ARS}, \cite{DHS}, \cite{ET},  and \cite{RT}.  We go through the uses in each of these papers to describe how the conclusions of the results still hold and what modifications are needed in the proofs.

In \cite{AP} the realization of a graded ideal as a Leavitt path algebra given in \cite[Lemma~1.2]{AP} is incorrect.  However, all uses of \cite[Lemma~1.2]{AP} throughout \cite{AP} may be replaced by Theorem~\ref{LPA-ideal-is-graph-alg-thm}.  In particular, when \cite[Lemma~1.2]{AP} is applied in \cite[Corollary~1.4]{AP},  the only fact that is needed is that a graded ideal in a Leavitt path algebra is itself isomorphic to a Leavitt path algebra, which is still true by Corollary~\ref{ideals-in-LPA-are-LPA-cor}.  In addition, when \cite[Lemma~1.2]{AP} is applied in the proof of \cite[Lemma~2.3]{AP}, one can verify that the argument used will still hold if one instead applies Theorem~\ref{LPA-ideal-is-graph-alg-thm} and uses our graph construction $\overline{E}_{(H,S)}$.

In \cite{APS} the realization of a graded ideal as a Leavitt path algebra given in \cite[Lemma~5.2]{APS} is incorrect.  However, all uses of \cite[Lemma~5.2]{APS} throughout \cite{APS} may be replaced by Theorem~\ref{LPA-ideal-is-graph-alg-thm}.   In particular, \cite[Lemma~5.3]{APS} only requires that any graded ideal in a Leavitt path algebra is itself isomorphic to a Leavitt path algebra, which is still true by Corollary~\ref{ideals-in-LPA-are-LPA-cor}.  In addition, when \cite[Lemma~5.2]{APS} is applied in the proof of \cite[Proposition~5.4]{APS}, one can verify that the argument used will still hold if one instead applies Theorem~\ref{LPA-ideal-is-graph-alg-thm} and uses our graph construction $\overline{E}_{(H,S)}$.

In \cite {ARS} the realization of a graded ideal as a Leavitt path algebra given in \cite[Proposition~3.7]{ARS} is incorrect.  However, this result is only applied in the proofs of \cite[Proposition~3.8]{ARS} and \cite[Theorem~3.15]{ARS}.  Both of these results only require that any graded ideal in a Leavitt path algebra is itself isomorphic to a Leavitt path algebra, a fact that is still true by Corollary~\ref{ideals-in-LPA-are-LPA-cor}.  

In \cite{DHS} the realization of a gauge-invariant ideal given in \cite[Lemma~1.6]{DHS} is incorrect.  However, \cite[Lemma~1.6]{DHS} is only used in two places in \cite{DHS}.  The first is \cite[Lemma~3.2]{DHS}, where the construction of ${}_HE_S$ is used for an ideal of the form $I_{(H, \emptyset)}$.  Since $S= \emptyset$, and the constructions for the graphs ${}_HE_S$ and $\overline{E}_{(H,S)}$ agree in this case, one may simply replace the application of \cite[Lemma~1.6]{DHS} by an application of Theorem~\ref{C*-ideal-is-graph-alg-thm} and the rest of the argument remains valid.  The second use of \cite[Lemma~1.6]{DHS} is in the proof of \cite[Theorem~3.4]{DHS} where the authors use the fact that all cycles in  ${}_HE_S$ come from $E$.  This is still true for  $\overline{E}_{(H,S)}$ (see Remark~\ref{cycles-come-from-remark}) and the same arguments hold when using $\overline{E}_{(H,S)}$.

In \cite{ET} the result of \cite[Lemma~1.6]{DHS} is applied in the proofs of \cite[Proposition~6.4]{ET}, \cite[Remark~3.2]{ET}, and \cite[Theorem~4.5]{ET}.  In \cite[Proposition~6.4]{ET} the construction of ${}_HE_S$ is used for an ideal of the form $I_{(H, \emptyset)}$ and since $S= \emptyset$ the constructions for the graphs ${}_HE_S$ and $\overline{E}_{(H,S)}$ agree in this case, the proof of \cite[Proposition~6.4]{ET} remains valid.  In the proofs of \cite[Remark~3.2]{ET} and \cite[Theorem~4.5]{ET}, the authors only need the fact that any gauge-invariant ideal in a graph $C^*$-algebra is isomorphic to a graph $C^*$-algebra, which is still true by Corollary~\ref{ideals-in-graph-C*-are-graph-C*-cor}.  

In \cite{RT} the the result of \cite[Lemma~1.6]{DHS} is applied in the proof of \cite[Lemma~3.11]{RT}.  However, all that is needed is that any gauge-invariant ideal in a graph $C^*$-algebra is isomorphic to a graph $C^*$-algebra, which is still true by Corollary~\ref{ideals-in-graph-C*-are-graph-C*-cor}.

\begin{remark}
The result of Theorem~\ref{C*-ideal-is-graph-alg-thm} is a nice complement to the fact that every quotient of a graph $C^*$-algebra by a gauge-invariant ideal is isomorphic to a graph $C^*$-algebra (see \cite[Corollary~3.5]{BHRS}).   Combining these results shows that the class of graph $C^*$-algebras is closed under passing to gauge-invariant ideals and taking quotients by gauge-invariant ideals.  In particular, if $E$ is a graph satisfying Condition~(K), then every ideal of $C^*(E)$ is isomorphic to a graph $C^*$-algebra, and every quotient of $C^*(E)$ is isomorphic to a graph $C^*$-algebra.

In a very similar way, the result of Theorem~\ref{LPA-ideal-is-graph-alg-thm} is a nice complement to the fact that every quotient of a Leavitt path algebra by a graded ideal is isomorphic to a Leavitt path algebra (see \cite[Theorem~5.7]{Tom10}).   Combining these results shows that the class of Leavitt path algebras is closed under passing to graded ideals and by taking quotients by graded ideals, and if $E$ is a graph satisfying Condition~(K), then every ideal of $L_K(E)$ is isomorphic to a Leavitt path algebra, and every quotient of $L_K(E)$ is isomorphic to a Leavitt path algebra.
\end{remark}

\begin{remark}
All ideals of graph $C^*$-algebras that we consider in this paper are gauge invariant, and likewise all ideals of Leavitt path algebras that we consider are graded.  We mention that there has been significant work done analyzing general ideals in graph $C^*$-algebras and Leavitt path algebras.  In particular, Hong and Szyma\'nski have given a description of the (not necessarily gauge-invariant) primitive ideals of a graph $C^*$-algebra in \cite{HS}.  Likewise, the (not necessarily graded) prime ideals of a Leavitt path algebra have been described by Aranda Pino, Pardo, and Siles Molina in \cite{APM}.
\end{remark}

\end{document}